\documentclass[11pt,a4paper]{article}
\usepackage[T1]{fontenc}
\usepackage[utf8]{inputenc}
\usepackage{lmodern}
\usepackage[margin=25mm]{geometry}
\usepackage{amsmath,amssymb,amsthm,booktabs,array,tikz,microtype}

\usepackage{graphicx,placeins}

\usetikzlibrary{arrows.meta,calc}
\usepackage[colorlinks=true,linkcolor=blue!40!black,citecolor=blue!40!black,urlcolor=blue!40!black]{hyperref}
\hypersetup{pdftitle={Peeling sequences: a directional method for the three-block construction},pdfauthor={Daniel Gabor Simon}}
\newtheorem{theorem}{Theorem}
\newtheorem{lemma}[theorem]{Lemma}

\newcommand{\alp}{\alpha}
\title{Peeling sequences: a directional method for the three-block construction}
\author{D\'aniel G\'abor Simon\thanks{Research supported by the ERC Advanced Grant ``Geoscape''.}\\[3pt]
\small HUN-REN Alfr\'ed R\'enyi Institute of Mathematics, Budapest, Hungary\\
\small E\"otv\"os Lor\'and University, Budapest, Hungary}
\date{\today}
\begin{document}
\maketitle

\begin{abstract}
A \emph{peeling sequence} of a finite planar point set is an ordering of point removals, in which
 each point is a vertex of the convex hull of the points not yet removed.
Write $g(S)$ for the number of such sequences, and $g(n)$ for the minimum
of $g(S)$ over $n$-point sets in general position. We present a method which can be used to prove better upper bounds on the previously analysed recursive 3-branch constructions $S_n$. In fact, we prove $g(n)\leq g(S_n)=O(6.57^n)$, using directional restrictions and a weighted prefix-tree argument.
\end{abstract}

\noindent\textbf{Keywords.} Peeling sequences; convex hull; recursive point
sets; geometric counting.\\
\textbf{2020 Mathematics Subject Classification.} 52C35, 05A16.

\section{Introduction}
A \emph{peeling sequence} of a finite planar point set $S$ is an ordering
$(p_1,\ldots,p_n)$ of its points such that $p_i$ is a vertex of
$\operatorname{conv}\{p_i,\ldots,p_n\}$ for every $i$. Write $g(S)$ for the
number of peeling sequences of $S$, and put
\[
 g(n)=\min\{g(S):S\subset\mathbb R^2,\ |S|=n,
                  \ S\text{ is in general position}\}.
\]
Here general position means that no three points are collinear. If $S$ is in convex position, it has exactly $g(S)=n!$ since points can be peeled in any order. The question of minimizing the number of such sequences was introduced by
Dumitrescu~\cite{Dum22}.

The removal of convex-hull vertices has a longer history in the study
of convex layers. In that process, all vertices of the current convex
hull are removed at the same time, and this is repeated on the remaining
point set. Convex layers have been studied in statistics
\cite{Barnett76} and computational geometry \cite{Chazelle85}.
Chazelle \cite{Chazelle85} gave an optimal $O(n\log n)$ algorithm for
computing them, and Har-Peled and Lidick\'y \cite{HL13} studied the
number of layers of the integer grid. In our problem, only one vertex
is removed at each step, and we count all possible removal orders.

Note that every set on $n\geq 3$ points has at least $3$ points on its convex hull, proving the recursive lower bound $g(n)\geq 3g(n-1)$ for $n\geq 3$. Based on that, the easy lower-bound $g(n)\geq2\cdot 3^{n-2}$ follows, which is still the best known lower-bound up to an improved constant factor. 

Dumitrescu~\cite{Dum22} obtained a subfactorial upper bound
$2^{O(n\log\log n)}$. Dumitrescu and T\'oth~\cite{DT25} subsequently
introduced a recursive construction giving $g(n)=O(12.29^n)$.
A more detailed analysis of this construction improved the upper bound
to $O(9.78^n)$~\cite{Simon26}. Recently, Hisatsuga, Johnston and
Miyazaki~\cite{HJM26} proved
\[
 g(n)\le(8+o(1))^n
\]
by combining estimates for one-sided truncations with a weighted
prefix-tree argument.

A related recursive construction was used by Edelsbrunner and
Welzl \cite{EW85} to obtain point sets with many halving lines;
this connection was already noted by Dumitrescu and T\'oth
\cite[Remark~3.1]{DT25}. The minimum number of peeling sequences has also
been studied in higher dimensions \cite{DT25,SimonHigher26}.

Our final counting argument also uses a weighted form of the
Kraft--McMillan inequality \cite{Kraft49,McMillan56,CT06},
as in \cite[Lemma~1.2]{HJM26}. The additional information we keep is the
directional restriction on each surviving block, together with the
sign of its preceding endpoint deletions.

We use the recursive three-block construction of Dumitrescu and T\'oth \cite{DT25}, while adding orientation to the affine copies. This idea of directing the point sets is the key to further improving the bounds. While all
three blocks remain, each legal deletion is forced within each
block from outside to inside. After the first block disappears, the surviving blocks are subject
to some directional restrictions. This is because the presence of the other block essentially forbids peeling from one side of each block. Formalizing these restrictions,
together with the signs of the preceding endpoint deletions, gives a
family of constrained peeling problems closed under recursion. Using weights chosen from one level of the construction, and applying the resulting estimate recursively at every level, we get the following theorem:

\begin{theorem}\label{thm:main}
Let $\beta=\frac{9+\sqrt{17}}2$, $\alp=\log_3 2.$ There is a recursively defined family of $n$-point sets $S_n$ in general
position such that, for every $n\ge1$,
\[
 g(n)\le g(S_n)
 \le\beta^n48^{\,2^{\lceil\log_3 n\rceil}-1}
 =\beta^n\exp\bigl(O(n^\alp)\bigr).
\]
In particular,
\[
 \limsup_{n\to\infty}g(S_n)^{1/n}\le\beta<6.562,
 \qquad g(n)=O(6.57^n).
\]
\end{theorem}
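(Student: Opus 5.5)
We argue by strong induction on the recursion level $k=\lceil\log_3 n\rceil$, over a constrained peeling problem that is closed under the recursion.

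\textbf{Construction.} For $n\ge 4$ write $n=a+b+c$ with $a,b,c\le\lceil n/3\rceil$. Let $S_n$ be the union of three small, flat affine copies $A,B,C$ of $S_a,S_b,S_c$, placed near the vertices of a triangle, as in Dumitrescu and T\'oth~\cite{DT25}. We additionally orient each copy: it is squeezed along a line through the centre of the triangle, so that each block is \emph{directed} with two ends. The placement should guarantee the following.
\begin{itemize}
\item While all three blocks are present, a point of a block is a hull vertex only if it is the current outer extreme point of that block along its direction. Each block is then peeled from outside to inside.
\item Once one block is exhausted, each surviving block may also be peeled from its inner end, subject to a restriction determined by the other surviving block.
\end{itemize}

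\textbf{Constrained family.} We define a finite family of constrained peeling counts $g_\tau(X)$. The type $\tau$ records which ends of the directed block $X$ are currently allowed, together with the sign (left or right) of the preceding endpoint deletion. The unconstrained count $g(S_n)$ is one member of this family. The geometric lemmas would show the following.
\begin{itemize}
\item Every peeling sequence of a constrained $S_n$ decomposes uniquely. There is a phase in which all three blocks remain; its output is an interleaving of the outer-end peels of the blocks.
\item That phase ends when one block empties. What follows is a constrained peeling of the remaining two blocks, and eventually of a single block.
\item Each of these later pieces is an interleaving of constrained peelings of the sub-blocks at the next recursion level, with types determined by $\tau$ and by the history.
\end{itemize}
This gives a recursive inequality
\[
g_\tau(S_n)\le \sum_{\text{histories}} \binom{\cdot}{\cdot}\prod g_{\tau'}(\text{sub-blocks}),
\]
where the binomial-type factors count interleavings.

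\textbf{Counting.} Rather than expanding the recursion, we follow the weighted prefix-tree (weighted Kraft--McMillan) argument of \cite[Lemma~1.2]{HJM26}. We aim for an induction hypothesis of the form
\[
g_\tau(S_m)\le w_\tau\,\beta^m\,48^{\,2^{\lceil\log_3 m\rceil}-1}
\]
for all $m<n$ and all types $\tau$. The weights $w_\tau$ form a positive eigenvector-type solution of a one-level inequality system. The key one-level estimate is this: at each step of the peeling we branch among the currently allowed hull vertices. We assign to each branch a weight $\beta^{-1}\cdot(\text{weight ratio of types})$. The number $\beta=\frac{9+\sqrt{17}}2$ should arise as the largest root of the characteristic equation $\beta^2-9\beta+16=0$ of a $2\times2$ transfer system between two block states. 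To verify this we check that the weighted Kraft sum over children is at most $1$ at every node of the prefix tree.

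\textbf{Error factor.} Only boundedly many "transition events" occur per level, namely the exhaustion of blocks and the type switches; a bound such as $48$ per event suffices. Each level multiplies the error factor by $48$ and squares the previous one, since two surviving blocks recurse independently. This is how the exponent $2^{k}-1$ with $k=\lceil\log_3 n\rceil$ arises: the error $E_k$ obeys $E_k\le 48E_{k-1}^2$, so $E_k\le 48^{2^k-1}$. Finally $2^{\log_3 n}=n^\alpha$ gives $\exp(O(n^\alpha))$, and the stated limsup bound and $g(n)=O(6.57^n)$ follow because $\beta<6.562$.

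\textbf{Main obstacle.} The hardest step is the one-level weighted inequality. It requires identifying exactly which types occur and solving for weights $w_\tau$ that make every Kraft sum at most $1$, with $\beta$ as small as $\frac{9+\sqrt{17}}2$. I would first compute the two-block directed case by hand to pin down the $2\times2$ system. I would then check that the three-block phase, where only outer ends are available and the count is therefore strictly cheaper, fits under the same weights.
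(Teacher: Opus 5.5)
There is a genuine gap, and it lies in the shape of your induction hypothesis. A sub-block does not see a static constraint. It sees a time-varying command string: a prefix of forced endpoint deletions (all of its deletions while three blocks are alive), then half-hull deletions, then free deletions.

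The lengths of these phases are unbounded and depend on the interleaving, so a finite family of types with bounds $w_\tau\beta^m$ cannot record them. You observe that the three-block phase is cheaper, but your hypothesis cannot pass that saving down to the sub-blocks: it charges every deletion in a surviving block a factor $\beta$. The three-block step of the Kraft sum is then $x_f+\beta(x_j+x_k)$ with $x_fx_jx_k=1$. This holds even granting factor $1$ to the first block emptied, which is fully determined. By AM--GM this sum is at most $\beta$ only if $\beta\ge27$; this is just the crude count of about $3^n$ interleavings against $\beta^{2n/3}$. If you instead meant $\beta^{\text{remaining size}}$ for truncated blocks, that is a much stronger claim which your plan does not prove; the paper's bounds for truncations keep a factor $t^p$ or $4^p$.

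The paper's device is to index the counts by the command string, $F_n^{\sigma,D}(p,q)$ for $E_\sigma^pD^qA^{n-p-q}$. It bounds them by a budget with one factor per command, $U_n^+(p,q)=t^p\beta^qa^{n-p-q}$ and $U_n^-(p,q)=4^p\beta^{n-p}$, where $t=\beta-4$ and $a=5\beta/4$. The one-level inequality is then $\sum_ix_ic_i\le c$, comparing the factors of the inherited commands with the factor of the current main command. For example, with $x_f=4$ and $x_j=x_k=1/2$ the three-block step reads $4+t/2+t/2=\beta$. This balance fixes $\beta$ together with $t^2=\beta$, which is what makes the weights $x_L=4/\beta$, $x_M=t/4$, $x_R=t$ of the case $\sigma=+$, $f=R$ have product $1$. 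No $2\times2$ transfer system is involved.

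The second missing piece is the closure property behind this indexing, which you defer to a placement that ``should guarantee'' it. After the first block empties, the survivors are not simply granted an inner end. Depending on the pair and the main command, they inherit upper- or lower-hull visibility commands $B,C$, or endpoint commands, and a lone $M$ swaps $B$ and $C$.

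For a concrete construction (the paper fixes the branch directions and reflects $L$), you must check three things. Each block's inherited string consists of endpoint commands of a single sign, then only $B$'s or only $C$'s, then $A$'s. The first block emptied receives only $E_+$. This gives the exact identity $F_n^{\sigma,D}(p,q)=\sum_{\pi^*}F_j(\pi^*)F_k(\pi^*)$, where the sub-block parameters depend on $\pi^*$. Interleavings therefore cannot be counted by separate binomial factors, which is why the prefix tree runs over simplified sequences with edge numbers $x_ic_i/c$.

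Your recursion $E_k\le48E_{k-1}^2$ for the error factor is right, but $48$ is not a per-event cost. It is $12$ classes (the first block emptied and the two inherited endpoint signs) times $4$. The $4$ comes from $\prod_ix_i^{n_i}\ge1/4$ when $x_Lx_Mx_R=1$ and $x_i\ge1/2$.
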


Sections~\ref{sec:construction}--\ref{sec:counting} contain the proof of
Theorem~\ref{thm:main}, with the description of the recursive construction $S_n$ and coordinate verification needed for Lemma~\ref{lem:router} in
Appendix~\ref{sec:coordinates}.

\section{The recursive construction and its deletion commands}\label{sec:construction}
For a finite point set with distinct first coordinates,   we define $5$ different deletion commands. Let $A$ mean
``delete any convex-hull vertex'', and let $B,C$ mean ``delete an upwards visible vertex of
the convex hull'' and ``delete a downwards visible vertex of
the convex hull'' respectively. Both commands include the leftmost and rightmost points. A vertex $v_0=(x_0,y_0)$ in a set $S$ is upwards visible, if for any $v=(x_0,y)\in \operatorname{conv}(S)$, $y\leq y_0$. Downwards visibility is defined similarly. Let $E_+,E_-$ mean ``delete the rightmost,
respectively leftmost, point''. Directions inside a block are always
measured in that block's own internal coordinates. The idea behind commands $B$ and $C$ is the fact, that whenever $2$ blocks are alive, for each block the points from the other one prevent us removing from one side of our block. Hence we define for each flattened block an internal direction, and based on that direction every block is going to have an upwards and a downwards side. More in detail explanation can be seen in Appendix \ref{sec:coordinates}.

\begin{lemma}\label{lem:router}
There is a family $S_n$ in general position with distinct first coordinates,
with $S_1$ a single point, in which, for $n\ge2$, $S_n$ is the
union of three thin affine copies $L,M,R$ of sizes
\[
 n_L=\lfloor n/3\rfloor,\quad n_M=\lfloor(n+1)/3\rfloor,
 \quad n_R=\lfloor(n+2)/3\rfloor.
\]
Empty copies are omitted. The following rules hold after arbitrary deletions from the blocks, with all commands applied to the points which remain.

When all three blocks are nonempty, commands $A$ and $C$ offer the $E_+$ point
of each block, and $B$ offers the $E_+$ points of $L,R$ only.
When two non-empty blocks remain, the instructions restricted to them are
\[
\begin{array}{c|ccc}
\text{blocks}&A&B&C\\\hline
LM&(B,B)&(B,E_-)&(E_+,B)\\
LR&(C,C)&(E_+,E_+)&(C,C)\\
MR&(C,B)&(C,B)&(E_+,E_+).
\end{array}
\]
With one remaining block, that block inherits command $A$ if the original command was $A$. Commands $B,C$ retain their
names in $L,R$ and interchange names in $M$. Endpoint commands are
\[
\begin{array}{c|c|c}
\text{command}&\text{block order}&\text{local endpoint commands}\\\hline
 E_+&R,M,L&E_+,E_-,E_-\\
 E_-&L,M,R&E_+,E_+,E_-.
\end{array}
\]
\end{lemma}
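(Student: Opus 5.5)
We prove Lemma~\ref{lem:router} by an explicit recursive construction, in the spirit of Dumitrescu and T\'oth~\cite{DT25} but with every block carrying an orientation. Place three reference points $\ell,m,r$ in general position, with $\ell$ to the left and $r$ to the right. The middle point $m$ lies slightly below the segment $\ell r$, or above it; the position will be chosen so that the middle block's orientation comes out reversed. Each block is an affine image $\phi_X(S_{n_X})$, $X\in\{L,M,R\}$, of a smaller member of the family. The map $\phi_X$ squeezes $S_{n_X}$ into a very thin, very short segment-like region through the reference point, along a prescribed direction $d_X$. The maps for $L$ and $R$ preserve the orientation of the internal vertical axis, and the map for $M$ reverses it. This reversal is exactly what the rule that $B$ and $C$ interchange names in $M$ encodes.

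The key geometric fact is a standard flatness estimate, which I would prove first. Choose the scaling factors small enough at each recursion level, say factors $\varepsilon_k$ decreasing fast enough in terms of the finitely many slopes involved. Then for any nonempty subsets $L'\subseteq L$, $M'\subseteq M$, $R'\subseteq R$, the hull vertices of $L'\cup M'\cup R'$ that come from $X'$ are determined by two things:
\begin{itemize}
\item which of the other blocks are nonempty;
\item the hull of $X'$ together with the directions in which the other blocks are seen from $X$.
\end{itemize}
The other blocks are then effectively points at the reference positions. Concretely, a vertex $v$ of $\operatorname{conv}(X')$ is a vertex of the union iff some outward normal of $\operatorname{conv}(X')$ at $v$ is not blocked by the other reference points. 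Because $X'$ is extremely flat along $d_X$, its outward normals split into an upward arc, a downward arc, and two tiny arcs around $\pm d_X^\perp$ at the endpoints. The blocked set is an open half-plane of normals determined by the directions to the other reference points. So the surviving vertices are: all of them ($A$), only the upward-visible ones ($B$), only the downward-visible ones ($C$), or a single endpoint ($E_\pm$).

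With this in hand, the rest is case analysis. For each pair of surviving blocks, each triple, and each incoming command, I would compute which normal arc survives and read off the entries of the table. For example, with all three blocks present and command $A$, the directions of $d_L,d_M,d_R$ must be chosen so that each block is seen only through the normal near its $E_+$ end. The incoming $B$/$C$ commands restrict normals further to the upper or lower half-plane of the global frame. Intersecting that restriction with the arc left by the other blocks gives the entries such as $(B,E_-)$ for $LM$ under $B$. For a single block the restriction is the incoming one, transported through $\phi_X$ together with its orientation, which gives the renaming in $M$. The endpoint rows follow by checking which block holds the global rightmost or leftmost point (order $R,M,L$ or $L,M,R$) and whether $\phi_X$ reverses the first coordinate; that reversal is what makes $M$ take $E_-$ under $E_+$, and the analogous flips for the $E_-$ row.

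The main obstacle is choosing the reference points and the directions $d_X$ so that all nine two-block entries, the three-block rule, and the endpoint table hold simultaneously. I would fix explicit coordinates, as promised in Appendix~\ref{sec:coordinates}. For each configuration the check then reduces to finitely many sign tests of cross products between $d_X$, $d_X^\perp$, and the vectors between reference points. The inequalities of these tests are strict, so general position and distinct first coordinates survive small perturbations. That lets the thinness parameters be taken uniformly small and closes the induction on $n$.
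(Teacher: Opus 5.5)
Your framework is the one behind the paper's verification, and it can be made to work: flatten each block so that all its internal edges are nearly parallel to its axis. Then decide which hull vertices of a block survive by comparing its normal cones with the directions to the other blocks, and, for $B$ and $C$, with the upper or lower half-plane. But the proposal stops exactly where the content of the lemma begins. The lemma asserts that one specific table is realised. Entries such as $(B,E_-)$ for $LM$ under $B$, or $(E_+,E_+)$ for $MR$ under $C$, follow from no general principle and can only be read off a concrete configuration. You call this choice ``the main obstacle'' and defer it, so as written nothing is proved.

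The paper supplies this configuration. It takes centres $v_L=(-2,1)$, $v_M=(-1,-1)$, $v_R=(3,0)$, with each block pointing radially outward. The embedding is $\Phi_X(x,y)=(1+\delta x)v_X+\varepsilon\chi_X\,y\,Jv_X$, with $J(x,y)=(-y,x)$ and $(\chi_L,\chi_M,\chi_R)=(-1,1,1)$. With $d_X=v_X$ and these signs, your sign tests all come out as required. For example, with three live blocks the admissible normals at $L$ form the arc $(78.7^\circ,206.6^\circ)$. This lies well inside the nearly half-circle $(63.4^\circ,243.4^\circ)$ of normals at the $E_+$ end of $L$. For $LM$ under $B$, the admissible normals of $M$ form $[0^\circ,26.6^\circ)$. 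This avoids both chain directions $\pm(1,-1)$ of $M$ and lies in the cone of its $E_-$ end.

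Several details of your sketch also need correcting.
\begin{itemize}
\item Your normal picture of a flat block is inverted. For a block flattened along $d_X$, the two \emph{endpoints} have normal cones that are nearly half-circles centred at $\pm d_X$. It is the interior vertices of the two chains whose normals are confined to tiny arcs around $\pm d_X^\perp$. This is what drives the case analysis: an admissible arc containing one of $\pm d_X^\perp$ exposes that whole chain (local $B$ or $C$), and one avoiding both exposes a single endpoint. With your version, the entries would come out wrong.
\item With three live blocks, the admissible set is the normal cone of the triangle $\ell m r$ at that vertex. This is an intersection of two half-planes, not a single half-plane.
\item The point $m$ must lie below $\ell r$, not ``or above'', since $B$ must not see $M$ while all three blocks are alive.
\item The endpoint table forces the local first coordinate to be reversed in both $L$ and $M$, not only in $M$: under $E_+$ both receive $E_-$. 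Combined with $B,C$ keeping their names in $L$, this makes $\phi_L$ orientation-reversing, which is the paper's reflection $\chi_L=-1$.
\item The interchange of $B,C$ in $M$ is a property of the map $\phi_M$, not of the position of $m$.
\end{itemize}
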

The rules are exact: a point can be deleted precisely when the indicated
local command allows it. The proof of the lemma is in
Appendix~\ref{sec:coordinates}.

\section{Defining command strings, and setting up the recursion.}\label{sec:responses}
For $\sigma\in\{+,-\}$, $D\in\{B,C\}$ and integers $p,q\ge0$ with $p+q\le n$, we define
\[
 F_n^{\sigma,D}(p,q)
\]
to be the number of peeling sequences of $S_n$ following command $E_\sigma$ for $p$ deletions, then command $D$ for $q$
deletions, then command $A$ for all remaining deletions. Note that both $p$ and $q$ might be zero, in which case the sign and direction we choose is arbitrary.
In particular,
$F_n^{\sigma,D}(0,0)=g(S_n)$. Similarly, $F_n^{\sigma,D}(n,0)=1$ since if all commands are just removing endpoints from the same direction, the peeling sequence is determined. The recursive arguments below concern $n\ge3$, when all three
blocks are nonempty. The cases $n=1,2$ will be checked directly.

\begin{lemma}\label{lem:closure}
Let $n\ge3$. For any block $X\in\{L,M,R\}$ of $S_n$, and any peeling sequence $\pi$ counted by $F_n^{\sigma,D}(p,q)$, consider the commands inherited by $X$ in the steps where $\pi$ removes an element of $X$. The list of these inherited commands:
\begin{itemize}
    \item Starts with a prefix using only endpoint commands $E_{\sigma'}$ of the same sign.
    \item Continues with either a string of command $B$'s or a string of command $C$'s.
    \item Finishes with a string of command $A$'s.
\end{itemize}
Each of the three phases might have length $0$, in which case the phase is skipped. The first block emptied inherits the string of only $E_+$ commands.
\end{lemma}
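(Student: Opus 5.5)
The plan is to follow a single peeling sequence $\pi$ counted by $F_n^{\sigma,D}(p,q)$ along its global command string $E_\sigma^{\,p}D^{\,q}A^{\,r}$. For each step, Lemma~\ref{lem:router} tells us which local command the affected block inherits. The global string has three phases (sign, direction, free). Independently, $S_n$ passes through three regimes: three blocks alive, two alive, one alive. Since both of these are monotone in time, I will walk through all interleavings. For each block $X$ I will check that its inherited string has the form $E_{\sigma'}^{*}\,D_X^{*}\,A^{*}$ with $D_X\in\{B,C\}$.

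\emph{Step 1 (three blocks alive).} By Lemma~\ref{lem:router}, the commands $A$, $B$, $C$ offer only $E_+$ points of blocks. For $E_\sigma$ I will use the endpoint table, which I read as skipping empty blocks. Under $E_+$ the first block in the order $R,M,L$ is $R$, and it receives $E_+$. Under $E_-$ the first block in the order $L,M,R$ is $L$, and it also receives $E_+$. So every block touched while three blocks are alive inherits only $E_+$. The first block to be emptied is emptied in this regime, so all its commands come from here, which proves the last sentence of the lemma. I must also record which blocks can hold such an $E_+$ prefix. Under $D=B$ only $L$ and $R$ can, while $M$ can also under $D=C$ or $A$. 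Under $E_\sigma$ only the outermost block can, and it stays the target until it is emptied.

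\emph{Step 2 (two blocks alive).} Read off the table of Lemma~\ref{lem:router} row by row ($LM$, $LR$, $MR$). In each row I check three things for each surviving block. First, the local letters obtained under the global $B$, the global $C$ and the global $A$ use at most one of $\{B,C\}$. Second, any endpoint letter inherited under global $D$ or $E_\sigma$ precedes all directional letters. Third, this endpoint letter has the same sign as any $E_+$ collected in Step 1. For instance, in $LM$ the global $B$ gives $(B,E_-)$ and $A$ gives $(B,B)$, so $L$ sees only $B$'s and $M$ sees $E_-$'s and then $B$'s. Here $M$ cannot carry an earlier $E_+$, because with $D=B$ it received nothing while three blocks were alive. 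Similarly, $C$ gives $(E_+,B)$, so $L$ sees $E_+$'s followed by $B$'s under $A$. The rows $LR$ and $MR$ are analogous: $LR$ gives $E_+$ and then $C$, and $MR$ gives $M\mapsto E_+$ or $C$ and $R\mapsto E_+$ or $B$. In each case the endpoint letter is $E_+$ exactly where an earlier $E_+$ may occur. Global endpoint commands in this regime again touch only the first surviving block of the relevant order, with a fixed local sign.

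\emph{Step 3 (one block alive).} Here $A$ passes to $A$, the letters $B,C$ keep their names in $L,R$ and swap in $M$, and endpoint commands follow the table. I check that the surviving block's directional letter agrees with the one it received in Step 2. For example, $M$ in $LM$ received $B$ under global $A$. If $L$ dies during the global $C$-phase, $M$ gets $C\mapsto B$, which is consistent. If it dies during the global $B$-phase, $M$ had only $E_-$'s so far and now gets $C$, which is also fine.

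The main obstacle is this exhaustive bookkeeping. A block could a priori receive $E_+$ early and $E_-$ later, or $B$ in one regime and $C$ in another. Ruling this out requires tracking which blocks can already be touched at each regime change, given $\sigma$ and $D$. I would organize it as a finite table indexed by $(\sigma,D)$, the order in which the blocks die, and the global phase in which each death occurs.
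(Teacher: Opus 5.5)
Your proposal is correct and follows essentially the same route as the paper. The paper also follows each block along a path in the $3\times3$ grid indexed by (number of surviving blocks) $\times$ (main phase $E_\sigma,D,A$), uses that this path only moves forward in both coordinates, and reads the inherited letters off Lemma~\ref{lem:router}; your Steps 1--3 and your path-wise check (for example, a block left alone during the $D$-phase may inherit a directional letter different from the one global $A$ would give with two blocks alive, which is harmless because that two-block $A$-phase is then never reached) are the bookkeeping the paper delegates to its grid figures.
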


\begin{proof}
For the first block emptied, the statement is obvious, since it only
inherits $E_+$ commands.

For the other blocks, we use the command inheritance patterns from
Lemma~\ref{lem:router}. Let $X$ be the block whose inherited commands we
investigate. The proof consists of a careful case study, which we
visualize. There are three stages based on how many blocks are still
alive, and three stages based on which stage of the main command string
we are at. We analyze what command $X$ inherits for each pair of these
stages, and organize them in $3\times3$ grid diagrams in
Figures~\ref{fig:L}, \ref{fig:M}, and \ref{fig:R}. The rows correspond to
$3,2,1$ blocks remaining, from bottom to top, and the columns correspond
to the main commands $E_\sigma,D,A$, from left to right. The choice of
which grid to analyze is based on the command $D$ used in the directional
stage and on the first disappearing block. In the first column, we use the
red or blue entries corresponding to the sign of the main endpoint command.
Red corresponds to $E_+$ and blue to $E_-$. An entry $\varnothing$
means that no point of $X$ can be removed in that case: either the stage
cannot occur with the specified first disappearing block, or the main
command does not allow a deletion from $X$.

The crucial observation is the following: as the command string
progresses, our position in the grid can only move towards the right or
the top. This is because we cannot return to an earlier stage of the
main command string, and the number of blocks still alive cannot
increase. For the inherited string, we just record the command in the current square whenever a point
of $X$ is removed. The same entry might be repeated, while some
squares might contribute no commands.

Upon inspection of all the grids, one can verify that along any such
path which can arise from a peeling sequence, the inherited endpoint commands have the same sign, followed by
only $B$'s or only $C$'s, and finally by only $A$'s. Any of these stages might
be empty. In particular, after an inherited $B$ or $C$, every later
nonempty entry on such a path is the same directional command or $A$;
after an inherited $A$, only $A$ can follow. Hence the inherited string has the form specified in the lemma.
\end{proof}

\begin{figure}[!htbp]
    \centering
    \includegraphics[width=0.72\linewidth]{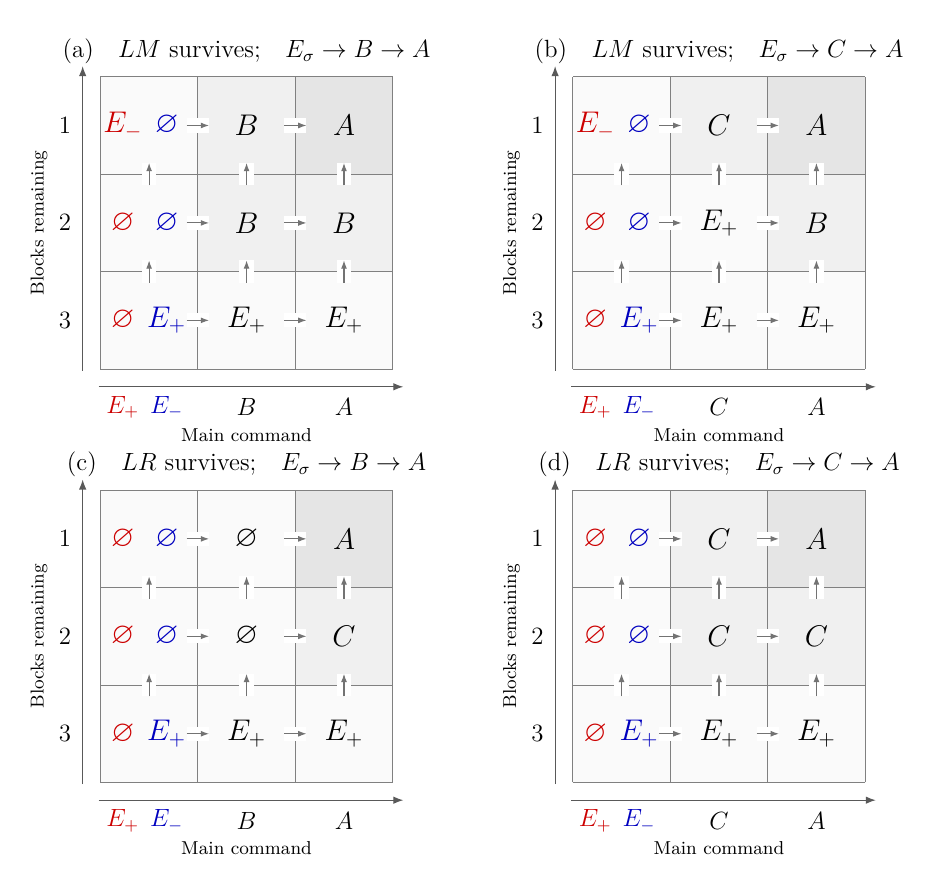}
    \caption{Command inheritance patterns for block L.}
    \label{fig:L}
\end{figure}

\begin{figure}[!htbp]
    \centering
    \includegraphics[width=0.72\linewidth]{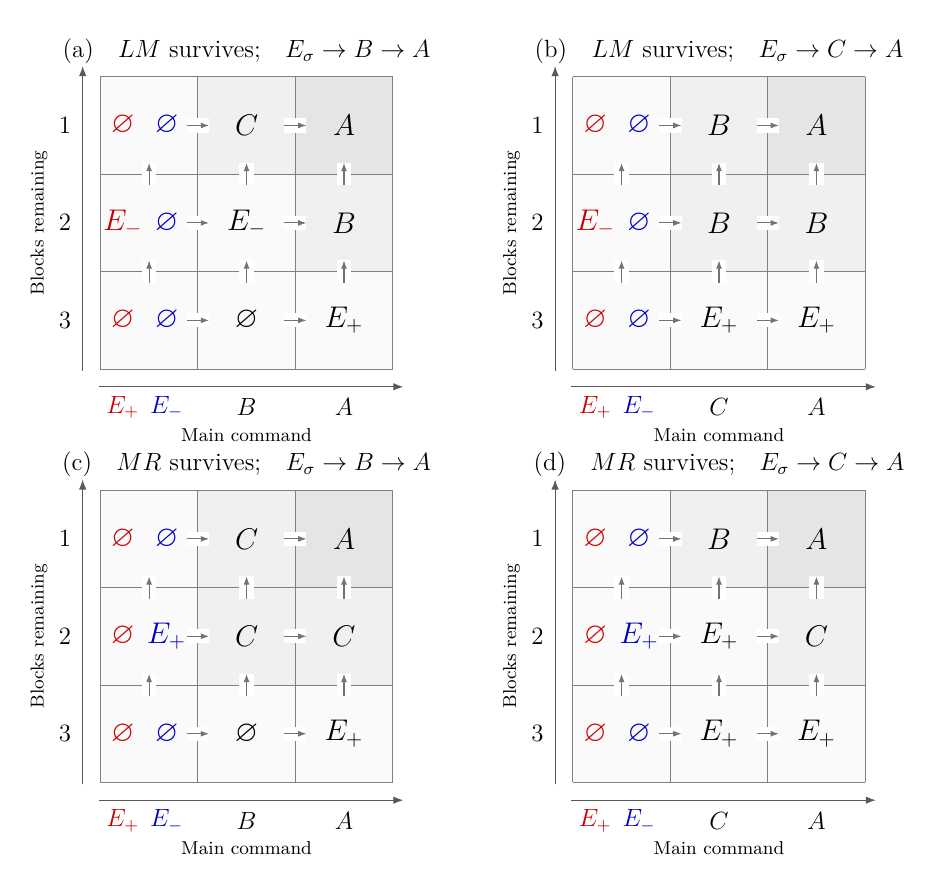}
    \caption{Command inheritance patterns for block M.}
    \label{fig:M}
\end{figure}

\begin{figure}[!htbp]
    \centering
    \includegraphics[width=0.72\linewidth]{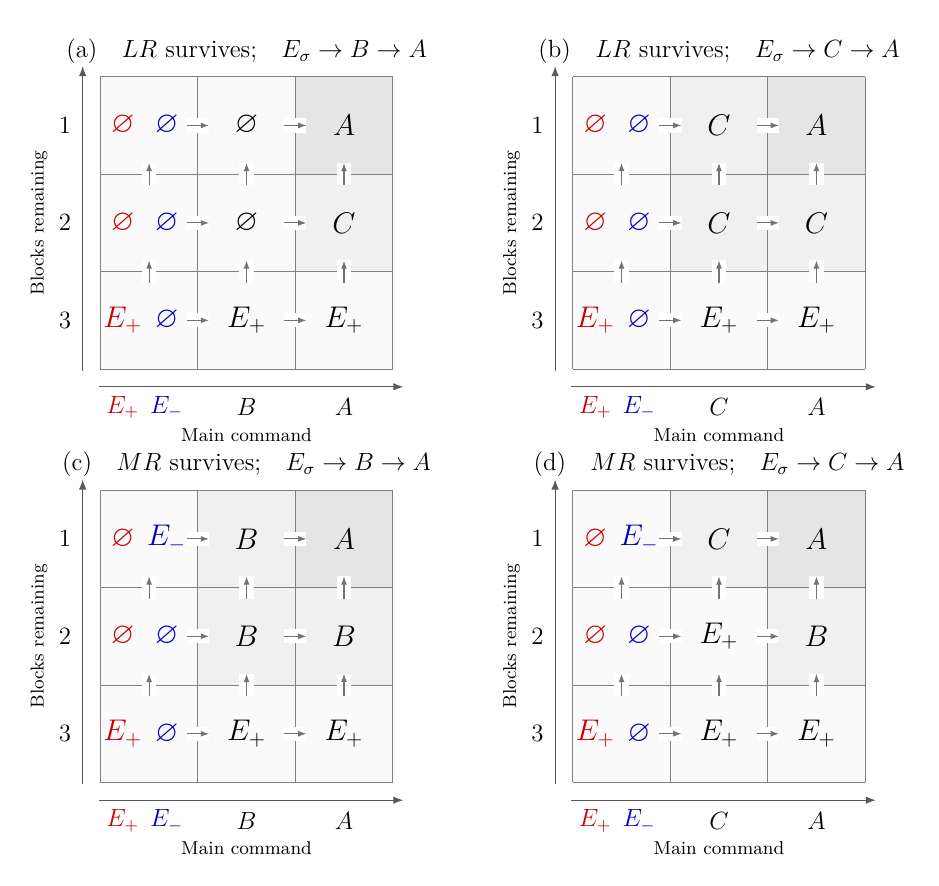}
    \caption{Command inheritance patterns for block R.}
    \label{fig:R}
\end{figure}
\FloatBarrier

\section{The recursive counting argument}\label{sec:recursion}
We now begin the proof of Theorem \ref{thm:main}. We first show how to use Lemma~\ref{lem:closure} to count the
peeling sequences recursively. The only part left from the proof for the next section
will be an estimate for the possible simplified peeling sequences.

Fix $\sigma,D,p,q$, and consider a peeling sequence counted by
$F_n^{\sigma,D}(p,q)$. At each step, write down $L,M$ or $R$ according
to which block the removed point belongs to. In this way we obtain a
sequence $\pi^*\in\{L,M,R\}^n$ with exactly $n_i$ occurrences of the symbol
$i\in\{L,M,R\}$. Following Dumitrescu and T\'oth~\cite{DT25}, we call $\pi^*$ the
\emph{simplified peeling sequence}. We call a simplified peeling
sequence \emph{valid} if it arises from at least one peeling sequence
counted by $F_n^{\sigma,D}(p,q)$.

The main command string and $\pi^*$ together determine the commands inherited
by each block. Indeed, the prefix of $\pi^*$ tells us which blocks are still
alive, and then Lemma~\ref{lem:router} gives the command inherited at the
next step. Conversely, if we choose a peeling sequence inside each block which
follows these inherited commands, we can combine the three peeling sequences defined on the blocks
according to $\pi^*$. The resulting sequence is a peeling sequence of the whole set following the fixed main commands, since the
rules in Lemma~\ref{lem:router} are exact.

Let $f$ be the first block emptied, and let $j,k$ be the other two
blocks, listed in the order $L,M,R$. By the last statement of Lemma~\ref{lem:closure}, the block $f$
inherits only $E_+$ commands, independent of the main command string.  For a fixed simplified
peeling sequence $\pi^*$, let $F_f(\pi^*)$, 
$F_j(\pi^*)$ and $F_k(\pi^*)$ denote the numbers of possible peeling sequences inside the
given block, with the inherited command structure determined by $\pi^*$. Block $f$ inherits the command string only containing $E_+$, so we can already tell $F_f(\pi^*)=1$ for any simplified peeling sequence. The following recursive equation holds by the bijection argument from the previous paragraph:
\begin{equation}\label{eq:simplified-sequences}
 F_n^{\sigma,D}(p,q)
 =\sum_{\pi^*\text{ valid}}F_f(\pi^*)F_j(\pi^*)F_k(\pi^*)=\sum_{\pi^*\text{ valid}}F_j(\pi^*)F_k(\pi^*).
\end{equation}
The blocks $f,j,k$, as well as the parameters in $F_j(\pi^*)$ and $F_k(\pi^*)$,
may depend on $\pi^*$.

We now define the products which will be used to upper bound the number of peeling sequences
inside the two remaining blocks following the inherited command from $\pi^*$. Set
\[
 t=\frac{1+\sqrt{17}}2=\beta-4,
 \qquad a=\frac{5\beta}{4},
\]
and define
\begin{equation}\label{eq:budgets}
 U_n^+(p,q)=t^p\beta^q a^{n-p-q},
 \qquad
 U_n^-(p,q)=4^p\beta^{n-p}.
\end{equation}
Each command contributes one factor to these products, according to the
following table:
\begin{equation}\label{eq:factors}
\begin{array}{c|ccc}
 \text{sign of the command string}&E_\sigma&D&A\\\hline
 +&t&\beta&a\\
 -&4&\beta&\beta.
\end{array}
\end{equation}
Thus a command string with positive endpoint sign, containing $p$ endpoint
commands and $q$ directional commands, receives the product
$t^p\beta^q a^{n-p-q}$. When we consider the first block emptied, we use
its exact number of sequences $1$ instead of the numbers in the table. If one of the
other blocks inherits no endpoint commands, we assign sign $+$ to it.
If it inherits no directional commands, the choice between $B$ and $C$
is arbitrary, since the proposed bound factors are the same for both.

Let
\[
 K(n)=48^{\,2^{\lceil\log_3 n\rceil}-1}.
\]
We will prove simultaneously, for both signs and both directions, the following:

\begin{lemma}\label{eq:induction} For any positive integer $n$, and command string defined by $\sigma,D,p,q$, the following holds:
 \[F_n^{\sigma,D}(p,q)\le K(n)U_n^\sigma(p,q).\]
\end{lemma}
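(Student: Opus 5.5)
We prove the bound by strong induction on $n$, for all four choices of $(\sigma,D)$ and all admissible $(p,q)$ at once.

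\textbf{Base cases.} For $n=1$ we have $F=1$ and $K(1)=48^{0}=1$. We need $U_1^\sigma(p,q)\ge1$, which holds because every factor $t,\beta,a,4$ in~\eqref{eq:factors} exceeds $1$. For $n=2$ and $n=3$ we have $K=48$, and $F_n\le n!\le 6$, so these cases are immediate.

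\textbf{Reduction for $n\ge3$.} We use~\eqref{eq:simplified-sequences}. For each valid $\pi^*$, Lemma~\ref{lem:closure} says that the blocks $j,k$ inherit command strings of the admissible shape $E_{\sigma'}^{p'}D'^{q'}A^{r'}$. Hence $F_j(\pi^*)=F^{\sigma_j,D_j}_{n_j}(p_j,q_j)$, and similarly for $k$. Since $n_j,n_k\le\lceil n/3\rceil<n$, the induction hypothesis gives
\[F_n^{\sigma,D}(p,q)\le K(\lceil n/3\rceil)^2\sum_{\pi^*}U_{n_j}^{\sigma_j}(p_j,q_j)\,U_{n_k}^{\sigma_k}(p_k,q_k).\]
We have $\lceil\log_3\lceil n/3\rceil\rceil=\lceil\log_3 n\rceil-1$, so $K(\lceil n/3\rceil)^2\cdot 48=K(n)$. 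It therefore suffices to prove the \emph{one-level weighted inequality}
\[\sum_{\pi^*\text{ valid}}U_{n_j}^{\sigma_j}(p_j,q_j)\,U_{n_k}^{\sigma_k}(p_k,q_k)\le 48\,U_n^\sigma(p,q).\]

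\textbf{Proving the one-level inequality.} We rewrite the left side as a product of per-step weights. At each step, the letter of $\pi^*$ removes a point from some block. That removal contributes the factor from~\eqref{eq:factors} for the command the block inherits: $1$ if the block is $f$, otherwise $t,\beta,a$ or $4,\beta,\beta$ according to the block's sign. We then compare this with the main-string factor of the same step, and verify a weighted Kraft--McMillan inequality in the style of \cite[Lemma~1.2]{HJM26}. At every prefix (node of the prefix tree), we need
\[\sum_{\text{legal next letters}}\frac{\text{block factor}}{\text{main factor}}\le1.\]
This is checked from Lemma~\ref{lem:router}, one case per stage.

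With three blocks alive, command $A$ or $C$ offers the three $E_+$ points. Each child currently inherits $E_+$, so it gets factor $t$ or $1$. The check is then $3t\le a$ or $3t\le\beta$. Under $B$ only two points are offered. Under $E_\pm$ there is a single offered point, whose factor is at most the main factor.

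With two blocks alive, we use the table in Lemma~\ref{lem:router}. For example, main $A$ on $LM$ gives $(B,B)$, so we need $2\beta\le a$ if the block signs allow it. The choice $a=5\beta/4$ and $t^2=t+4$ (i.e.\ $\beta=t+4$) are designed so that these local checks hold. With one block alive the ratio is $1$.

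\textbf{Main obstacle.} The sign bookkeeping breaks pure per-step local checks. A block's factor depends on its eventual sign: blocks with no endpoint prefix receive sign $+$, and $A$ costs $a$ under $+$ but $\beta$ under $-$. In addition, the transitions when a block dies change which table applies. Both effects create finitely many boundary losses. I expect these to be absorbed by the constant $48$: each block's first inherited phase can be mislabeled at most a bounded number of times, giving a factor such as $(a/\beta)^{O(1)}$ or $(\beta/t)^{O(1)}$, together with at most $3$ choices of $f$ and a few stage transitions. The case that needs the most care is main sign $-$ with $D=B$ or $C$, where the budget $4^p\beta^{n-p}$ is tight. I would first treat the $E_-$ stage using the endpoint table: $L,M$ receive $E_+$ and $R$ receives $E_-$, and since the order of blocks is forced, the factor $4$ must dominate $t$ or $4$ exactly. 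After that I would verify the remaining grid cells from Figures~\ref{fig:L}--\ref{fig:R} one at a time.
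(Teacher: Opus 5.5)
Your induction, base cases, and reduction to the one-level inequality $\sum_{\pi^*}U_j(\pi^*)U_k(\pi^*)\le 48\,U_n^\sigma(p,q)$ match the paper. However, your plan for proving that inequality, which is the real content of the lemma, does not work. The unweighted per-step check $\sum(\text{block factor})/(\text{main factor})\le1$ fails at linearly many nodes, not at boundedly many ``boundary'' nodes:
\begin{itemize}
\item After $f$ disappears, main $A$ on $LM$ gives $(B,B)$. The ratio sum is $2\beta/a=8/5$, so your required $2\beta\le a$ is false. Main $C$ on $LR$ gives $(C,C)$, with ratio sum $2$.
\item With one block left, main sign $-$ and block sign $+$ (e.g.\ $p=q=0$, $\sigma=-$, the case needed for the theorem), each $A$ step has ratio $a/\beta=5/4$, not $1$.
\item In the main $E_+$ stage with $f=R$, every deletion from $M$ and then $L$ inherits $E_-$ and has ratio $4/t>1$. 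That stage is affordable only in aggregate, because $R$'s steps have ratio $1/t$ and $4^{2/3}<t$.
\item Also, $3t\le\beta$ is false. The correct three-block count is $1+2t\le\beta$, because one offered block is $f$.
\end{itemize}
Each of these situations can last $\Theta(n)$ steps. So your prefix-tree argument only gives a bound off by a factor exponential in $n$, and no constant such as $48$ can absorb that.

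The missing idea is a device that transfers the surplus from the cheap deletions (factor $1$ for $f$, ratio $1/t$ in the endpoint stage) to the expensive later steps.
\begin{itemize}
\item The paper first fixes one of the twelve classes $(f,\sigma_j,\sigma_k)$ and keeps only prefixes completable in that class. Then every inherited factor is determined by the prefix. This is how the sign-dependence you flagged is handled, at the cost of a factor $12$ instead of a per-step loss.
\item It then multiplies each deletion from block $i$ by a weight $x_i$, with $x_Lx_Mx_R=1$ and $\tfrac12\le x_i\le4$. The choice is $x_f=4$ and $x_i=\tfrac12$ otherwise, except $(x_L,x_M,x_R)=(4/\beta,t/4,t)$ when $\sigma=+$ and $f=R$.
\end{itemize}
With these weights the check $\sum_i x_ic_i\le c$ holds at every node. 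For instance, it gives $4+\tfrac t2+\tfrac t2=\beta$ with three blocks alive, $\tfrac\beta2+\tfrac\beta2=\beta$ with two, and $a/2\le\beta$ with one. In the exceptional row it gives $\beta(x_L+x_M)=\tfrac54\beta=a$, which is exactly where $a=5\beta/4$ comes from. Every leaf picks up the same extra factor $\prod_i x_i^{n_i}\ge\tfrac14$. Hence each class contributes at most $4U_n^\sigma(p,q)$, and the total is at most $12\cdot4=48$ times $U_n^\sigma(p,q)$.
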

Suppose for the moment that this has already been proved for smaller
sets, and put $m=\lceil n/3\rceil$. Each block has at most $m$ points.
Since $K$ is nondecreasing, for a fixed valid simplified peeling
sequence $\pi^*$ the induction gives
\[
 F_j(\pi^*)F_k(\pi^*)
 \le K(m)^2U_j(\pi^*)U_k(\pi^*),
\]
where $U_j(\pi^*)$ and $U_k(\pi^*)$ mean the expressions in
\eqref{eq:budgets}, with the parameters inherited by the corresponding
blocks. Substituting this into \eqref{eq:simplified-sequences}, we get
\begin{equation}\label{eq:induction-reduction}
 F_n^{\sigma,D}(p,q)
 \le K(m)^2\sum_{\pi^*\text{ valid}}U_j(\pi^*)U_k(\pi^*).
\end{equation}
Thus, to prove Lemma \ref{eq:induction}, it is enough to show that
\begin{equation}\label{eq:budgetsum}
 \boxed{
 \sum_{\pi^*\text{ valid}}U_j(\pi^*)U_k(\pi^*)
 \le48U_n^\sigma(p,q).}
\end{equation}
Figure~\ref{fig:proof-architecture} summarizes the argument. We prove
\eqref{eq:budgetsum} in Section~\ref{sec:weights}.

\begin{figure}[htbp]
\centering
\begin{tikzpicture}[
  box/.style={draw=black!55,fill=black!3,rounded corners=1.5pt,
    text width=4.6cm,minimum height=1.85cm,align=center,
    inner sep=4pt,font=\small},
  last/.style={box,draw=blue!45!black,fill=blue!6},
  arrow/.style={-{Latex[length=2mm]},thick,draw=black!65}]
  \node[box] (main) at (0,0)
    {Start with any peeling sequence counted by $F_n^{\sigma,D}(p,q)$.};
  \node[box] (word) at (5.05,0)
    {Fix the simplified peeling\\ sequence $\pi^*$. It determines the\\ inherited commands.};
  \node[box] (blocks) at (10.10,0)
    {The first block's sequence is fixed. The other blocks form $F_j(\pi^*)F_k(\pi^*)$ different sequences.};
  \node[box] (induction) at (10.10,-3.00)
    {Induction bounds this by\\ $K(\lceil\frac{n}{3}\rceil)^2U_j(\pi^*)U_k(\pi^*)$.};
  \node[box] (sum) at (5.05,-3)
    {Sum over all valid $\pi^*$.\\ The weights give\\ $\sum_{\pi^*}U_j(\pi^*)U_k(\pi^*)\le48U_n$.};
  \node[last] (conclusion) at (0,-3.00)
    {Conclude\\ $F_n^{\sigma,D}(p,q)\le48K(\lceil\frac{n}{3}\rceil)^2U_n^\sigma(p,q)$.};
  \draw[arrow] (main)--(word);
  \draw[arrow] (word)--(blocks);
  \draw[arrow] (blocks)--(induction);
  \draw[arrow] (induction)--(sum);
  \draw[arrow] (sum)--(conclusion);
\end{tikzpicture}
\caption{The structure of the recursive argument. The blocks $j,k$ and
their inherited parameters depend on the simplified peeling sequence
$\pi^*$.}
\label{fig:proof-architecture}
\end{figure}

We next check that \eqref{eq:budgetsum} is exactly the estimate needed
to complete the induction, and the base cases hold. For $n=1$, there is only $1$ sequence, while
$K(1)=1$ and every number in \eqref{eq:factors} is greater than $1$.
For $n=2$, there are at most two peeling sequences, while $K(2)=48$
and $U_2^\sigma(p,q)\ge1$. Thus Lemma~\ref{eq:induction} holds in both
base cases.
For $n\ge3$, equations \eqref{eq:induction-reduction} and
\eqref{eq:budgetsum} give
\[
 F_n^{\sigma,D}(p,q)
 \le48K(m)^2U_n^\sigma(p,q).
\]

If $h=\lceil\log_3n\rceil$, then
$\lceil\log_3m\rceil=h-1$. Consequently,
\[
 48K(m)^2
 =48^{\,1+2(2^{h-1}-1)}
 =48^{\,2^h-1}
 =K(n),
\]
which proves Lemma \ref{eq:induction}.

To prove Theorem \ref{thm:main}, take $p=q=0$ and use the negative sign. Then
$F_n^{-,D}(0,0)=g(S_n)$ and $U_n^-(0,0)=\beta^n$, giving
\[
 g(S_n)\le K(n)\beta^n.
\]
Since
$2^{\lceil\log_3n\rceil}\le2n^{\log_3 2}$ and
$\log_3 2<1$, we have $K(n)^{1/n}\to1$. Hence
\[
 \limsup_{n\to\infty}g(S_n)^{1/n}\le\beta.
\]
As $\beta<6.562<6.57$, the subexponential factor $K(n)$ can be
absorbed into $6.57^n$ for all sufficiently large $n$. The finitely
many smaller values of $n$ are absorbed into the constant in the
$O$-notation. Thus, once \eqref{eq:budgetsum} is proved, Theorem \ref{thm:main}
follows.

\section{The weight argument}\label{sec:weights}\label{sec:counting}
We now prove \eqref{eq:budgetsum}. Read a simplified peeling sequence
from left to right. After any fixed prefix, the next element records which
block supplies the next point. We assign weights so that the sum of weights
corresponding to all possible next entries is at most the weight
of the current main command.

We first split the valid simplified peeling sequences into classes $(f,\sigma_j,\sigma_k)$. A class specifies
the first block $f$ emptied and the endpoint signs inherited by the
other two blocks. There are three choices for $f$ and two choices for
each of the signs, so there are at most
\[
 3\cdot2^2=12
\]
classes. Some of these classes might be empty. As before, a block which
inherits no endpoint commands is assigned sign $+$. With this
notation, every simplified peeling sequence belongs to exactly one of the
classes.

Fix one class. We give the three blocks positive weights
$x_L,x_M,x_R$. If the next point is removed from block $i$, we multiply
the factor inherited by this block by $x_i$. We can do this because every simplified peeling sequence selects block $i$ exactly $n_i$
times. Therefore the product of all extra weights along any such
sequence is always
\[
 x_L^{n_L}x_M^{n_M}x_R^{n_R},
\]
independently of the order in which the blocks occur. Thus this extra
product depends only on the block sizes, and not on the sequence itself.
We can therefore use the weights at every step and divide out the same
product at the end.

We use the following weights based on $\sigma$ and the first disappearing block:
\begin{equation}\label{eq:weights}
\begin{array}{c|ccc}
\text{case}&x_L&x_M&x_R\\\hline
 \sigma=+,\ f=R&4/\beta&t/4&t\\
 \text{all other cases}&\multicolumn{3}{c}
 {x_f=4,\quad x_i=1/2\quad(i\ne f).}
\end{array}
\end{equation}
The first row is needed in the exceptional case when the main sign is
$+$ and $R$ disappears first. A positive main endpoint stage visits
the blocks in the order $R,M,L$, and the later
blocks may inherit negative endpoint commands, whose factor is $4$, even
though the factor of the main command is only $t$. The asymmetric
weights in the first row balance these three possibilities. In every
other case, the simpler second row is sufficient.

In each row, the weights satisfy
\begin{equation}\label{eq:product}
 x_Lx_Mx_R=1,
 \qquad \frac12\le x_i\le4.
\end{equation}
For the first row, the product is $t^2/\beta=1$; the second row has
product $4\cdot(1/2)^2=1$. The bounds in the first row follow from
$5/2<t<8/3$. We will use the elementary relations
\begin{equation}\label{eq:identities}
 t^2=\beta=t+4,
 \qquad
 \frac4\beta+\frac t4=\frac54,
 \qquad
 \frac{16}\beta\le t,
 \qquad
 \frac{9t}{4}\le\beta.
\end{equation}
Here $5/2<t<8/3$. The second identity follows after multiplying by
$4\beta$ and using $t^3=5t+4$. The third follows from
$t^3=5t+4>16$, and the fourth from $t>9/4$ and $\beta=t^2$. As a reminder, 
\[
 t=\frac{1+\sqrt{17}}2=\beta-4,
 \qquad a=\frac{5\beta}{4}.
\]

\begin{lemma}\label{lem:weights}
Fix a class and a prefix which can be completed to a simplified peeling
sequence in this class. For every possible next block $i$, let $c_i$ be
the number
assigned in \eqref{eq:factors} to the command inherited by $i$. For the
first block emptied, we use $c_f=1$. Let $c$ be the number assigned to
the current main command. Then
\[
 \sum_i x_i c_i\le c.
\]
\end{lemma}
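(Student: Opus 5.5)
The plan is a finite case check. It is organised by two things: the number of blocks still alive (three, two or one), and the stage of the main command string ($E_\sigma$, $D$ or $A$).

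For each pair I will use Lemma~\ref{lem:router} to list the blocks offered by the current main command and the command each one inherits. Lemma~\ref{lem:closure} and the fixed class $(f,\sigma_j,\sigma_k)$ then give the factor $c_i$ from \eqref{eq:factors}. The key point is that inside a fixed class the endpoint sign of each surviving block is known in advance. So an inherited $E_+$ in a block of sign $+$ costs $t$, not $4$. A block whose class sign is $-$ cannot inherit $E_+$ after its $E_-$ prefix without leaving the class. I will always use $c_f=1$.

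The $E_\sigma$ stage is easy, because an endpoint command offers exactly one block (the order $R,M,L$ or $L,M,R$). So the sum has one term, and I only need $x_ic_i\le c$.
\begin{itemize}
\item For $\sigma=-$ we have $c=4$. The possible terms are $4\cdot1$ for $f$ and $\tfrac12\cdot c_i\le2$ otherwise.
\item For $\sigma=+$ in the generic row we have $c=t$, with terms $4\cdot1$ for $f$ and $\tfrac12\cdot 4=2\le t$ otherwise. I still have to check that $f$ cannot be offered under $E_+$ unless this term is harmless. If necessary I will argue that $f$ being offered first forces it to be $R$, which is the exceptional row.
\item In the exceptional row ($\sigma=+$, $f=R$) the terms are $t\cdot1=t$ for $R$, $\tfrac t4\cdot 4=t$ for $M$, and $\tfrac4\beta\cdot4=\tfrac{16}\beta\le t$ for $L$. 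This uses \eqref{eq:identities}.
\end{itemize}

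With three blocks alive and main command $D$ or $A$, commands $A$ and $C$ offer all three blocks and $B$ offers $L,R$. Every offered block inherits $E_+$, so in the class it has sign $+$ and factor $t$ (or $1$ if it is $f$).
\begin{itemize}
\item In the generic row the sum is at most $4+\tfrac t2+\tfrac t2=4+t=\beta$. This is $\le c$, since $c\in\{\beta,a\}$ and $a>\beta$.
\item In the exceptional row the sum is at most $\tfrac{4t}\beta+\tfrac{t^2}4+t=\tfrac4t+\tfrac\beta4+t$. I will show this is $\le\beta$ using $t^2=\beta$ and $t>9/4$.
\end{itemize}

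With two blocks alive, $f$ is gone and only the tables of Lemma~\ref{lem:router} matter. In the generic row both weights are $\tfrac12$, so the sum is at most $\max c_i$. Inherited commands are $B$, $C$ or $E_\pm$, with factors $\beta$, $t$ or $4$, all $\le\beta\le c$ when the main command is $D$ or $A$. Under a main $E_\pm$ only one block moves.

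The exceptional row is where the work is, with surviving blocks $L,M$. Here I use $\tfrac4\beta+\tfrac t4=\tfrac54$, which gives a sum of at most $\tfrac54\max c_i$.
\begin{itemize}
\item Main $A$ with $\sigma=+$: this gives $\tfrac54\beta=a$, which is exactly the budget.
\item Main $D=B$: the table gives $(B,E_-)$, so the sum is $\tfrac4\beta\beta+\tfrac t4\cdot4=4+t=\beta$.
\item Main $D=C$: the table gives $(E_+,B)$, so the sum is $\tfrac{4t}\beta+\tfrac{t\beta}4$. This is handled by the same estimate as in the three-block case.
\end{itemize}

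With one block alive, it is the only term. Its inherited factor equals or is dominated by $c$, except that inside the block the main sign may flip the factor between $t$ and $4$. The weights $\tfrac12$, $\tfrac t4<1$ and $\tfrac4\beta<1$ absorb this, since $\tfrac12\cdot4\le t$ and $\tfrac t4\cdot 4=t$.

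I expect the main obstacle to be the bookkeeping rather than any single inequality. Each inherited command has to be paired with the correct sign of its block inside the class, and I must confirm that the combinations I discard (for example $f$ offered under $E_+$ in the generic row, or a negative-sign block inheriting $E_+$) really cannot occur for a prefix that completes inside the class. For this I would go through Figures~\ref{fig:L}--\ref{fig:R} systematically, one grid square at a time.
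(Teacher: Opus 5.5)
Your plan is essentially the paper's proof: it uses the same split by surviving blocks and main-command stage and the same two weight rows. It also uses the same observation that $f\neq R$ forces the positive endpoint stage to offer only $R$ (your tentative ``if necessary'' argument is exactly the paper's), and your deferred exceptional-row sums both equal $9t/4\le\beta$; since $t\beta=t^2+4t$, the $D=C$ sum coincides with the three-block one. The one slip is in the one-block case, where the $A$-factor can flip as well as the $t\leftrightarrow4$ endpoint factor: with main sign $-$ and main command $A$ one has $c=\beta$, but the surviving block may have sign $+$ (e.g.\ it never inherited an endpoint command), giving $c_i=a>\beta$, so your claim that otherwise $c_i\le c$ fails there; it is absorbed by $x_i=\tfrac12$, since $a/2=5\beta/8\le\beta$, exactly as the paper does.
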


\begin{proof}
The endpoint signs of the two blocks other than $f$ are fixed in the class.
Therefore, if a possible next deletion would give one of them the wrong
endpoint sign, it does not occur among the choices in the sum. In
particular, whenever one of these two blocks inherits $E_+$, its factor
is $t$.

First consider the second row of \eqref{eq:weights}. While all three
blocks are alive and the main endpoint stage has already finished, the
first block contributes at most $4\cdot1$, and each of the other two
blocks contributes at most $(1/2)t$. Hence the total is at most
\[
 4+\frac t2+\frac t2=4+t=\beta.
\]
This is at most the main factor for $B,C$ or $A$. After $f$ disappears,
the two remaining blocks both have weight $1/2$. While both remain,
each inherited command is either an endpoint command or a directional
command, with factor at most $\beta$. Their total contribution is therefore at most
$(1/2)\beta+(1/2)\beta=\beta$. When only one block remains, its contribution is at most
$a/2\le\beta$ under command $A$, and at most $\beta/2$ under $B$ or
$C$.

It remains to check the main endpoint stage. If its sign is negative,
only one block can supply the next point. Its contribution is
$4\cdot1$ if this is the first block emptied, and at most
$(1/2)\cdot4$ otherwise. Both are at most the main factor $4$. If the
main sign is positive, then in the second row of \eqref{eq:weights} we
have $f\ne R$. Thus the endpoint stage must finish before $R$
disappears. During this whole stage only $R$ is selected, and its
contribution is $t/2\le t$.

Now consider the first row of \eqref{eq:weights}. Here the main sign is
positive and $R$ disappears first. During the main endpoint stage, the
blocks are visited in the order $R,M,L$. The corresponding
contributions are at most
\[
 x_R\cdot1=t,
 \qquad x_M\cdot4=t,
 \qquad x_L\cdot4=\frac{16}{\beta}\le t.
\]
Thus every step is bounded by the main factor $t$.

Outside the endpoint stage, while all three blocks are alive, the block
$R$ contributes at most $t\cdot1$, and the other blocks together
contribute at most $t(x_L+x_M)$. Hence the total is at most
\[
 t+t(x_L+x_M)=\frac{9t}{4}\le\beta.
\]
After $R$ disappears, the remaining pair is $LM$. Reading the row $LM$
of the table in Lemma~\ref{lem:router}, we get
\[
\begin{array}{c|c|c}
 \text{main command}&\text{upper bound on the sum}&
     \text{main factor}\\\hline
 A&\beta(x_L+x_M)=5\beta/4&a\\
 B&\beta x_L+4x_M=4+t&\beta\\
 C&t x_L+\beta x_M=9t/4&\beta.
\end{array}
\]
Every entry in the middle column is at most the corresponding main
factor, by \eqref{eq:identities}. Finally, suppose only one of $L,M$
remains. Outside the endpoint stage, its weight is at most $1$, and its
local factor is at most the corresponding main factor. The endpoint
cases were already checked above. This proves the lemma.
\end{proof}

Lemma~\ref{lem:weights} concerns only one deletion. We now apply it
repeatedly to obtain a bound for complete simplified peeling sequences.
The following argument is the weighted prefix-tree form of the
Kraft--McMillan inequality; see \cite[Lemma~1.2]{HJM26}. For its
coding-theoretic background, see \cite{Kraft49,McMillan56} and
\cite[Chapter~5]{CT06}. We include the argument to keep the proof
self-contained.
Fix one of the twelve classes, and consider the tree formed by the
prefixes of the sequences in this class. We only keep prefixes which
can be completed to a simplified peeling sequence in the class. An edge labelled $i$
means that the next point is removed from block $i$. Using the notation
of Lemma~\ref{lem:weights}, give this edge the number
\begin{equation}\label{eq:edge}
 \rho_i=\frac{x_i c_i}{c}.
\end{equation}
The inherited command is determined by the prefix. Its factor is also
determined, because the endpoint signs are fixed within the class.

Give the root of the tree value $1$. The value of every other node is
the product of the numbers on the path from the root to that node. By
Lemma~\ref{lem:weights}, the sum of the outgoing edge numbers is at most
$1$. Therefore the total value cannot increase when we pass from one
level of the tree to the next. It follows that the sum of the values of
all leaves is at most $1$.

For a complete simplified peeling sequence $\pi^*$, the product of the edge
numbers on its path is
\[
 \left(\prod_i x_i^{n_i}\right)
 \frac{U_j(\pi^*)U_k(\pi^*)}{U_n^\sigma(p,q)}.
\]
Indeed, the numerator contains the factors inherited by the two blocks
other than $f$, while every deletion in $f$ has factor $1$. The
denominator contains exactly the factors of the main command string.
The extra weights give $x_i$ once every time block $i$ occurs.

The product $\prod_i x_i^{n_i}$ is the same for every simplified peeling
sequence in the class. Summing over the leaves therefore gives
\begin{equation}\label{eq:sum}
 \sum_{\pi^*\text{ in the class}}U_j(\pi^*)U_k(\pi^*)
 \le
 \frac{U_n^\sigma(p,q)}{\prod_i x_i^{n_i}}.
\end{equation}
Write $n=3s+r$, where $0\le r\le2$. Every block has size $s$ or
$s+1$, and exactly $r$ blocks have the larger size. Using
\eqref{eq:product},
\[
 \prod_i x_i^{n_i}
 =(x_Lx_Mx_R)^s
   \prod_{i:\,n_i=s+1}x_i
 \ge\left(\frac12\right)^r
 \ge\frac14.
\]
Thus the contribution of one class is at most
$4U_n^\sigma(p,q)$. There are at most twelve classes, so
\[
 \sum_{\pi^*\text{ valid}}U_j(\pi^*)U_k(\pi^*)
 \le12\cdot4U_n^\sigma(p,q)
 =48U_n^\sigma(p,q).
\]
This proves \eqref{eq:budgetsum}, and completes the proof of
Theorem~\ref{thm:main}.

\section{Concluding Remarks}

 The upper-bound method used in this paper can be refined by keeping
track of the directions of the $n/9,n/27,\ldots$ subblocks and choosing
weights for the resulting command strings. Such refinements require more
involved calculations; we do not state any additional numerical bound here.

 The most interesting question still remains open. Can the lower bound be improved from the almost trivial $g(n)\geq c\cdot3^n$ where $c$ is some positive constant? A polynomial factor would already be interesting.

\paragraph{Acknowledgement.}
 The inspiration for the proof can be traced back to a discussion I had with Géza Tóth many years ago, so I would like to thank him for that.

 \paragraph{AI assistance.}
I used OpenAI's ChatGPT-5.6 Sol and ChatGPT-6 Astra to assist with developing and checking the proofs, preparing the figures, and revising the exposition. I have checked the mathematical arguments and take full
responsibility for the results and the final manuscript.

\clearpage
\appendix
\section{Proof of Lemma \ref{lem:router}}\label{sec:coordinates}

\subsection{The recursive coordinates}

We construct $S_n$ by induction on $n$, maintaining distinct first
coordinates and general position. Let $S_1=\{(0,0)\}$. For $n\ge2$,
use blocks of sizes
\[
 n_L=\lfloor n/3\rfloor,\qquad
 n_M=\lfloor(n+1)/3\rfloor,\qquad
 n_R=\lfloor(n+2)/3\rfloor,
\] respectively for the left, middle and right blocks, 
and omit an empty block.

Before embedding a recursive block, translate it and multiply
both coordinates by a positive number so that it lies in $[-1,1]^2$. Also, in this state we refer to the block as directed, and its direction is towards the positive direction on the $x$-axis.
These operations preserve every deletion command in Lemma \ref{lem:router}.

Fix
\[
 v_L=(-2,1),\qquad v_M=(-1,-1),\qquad v_R=(3,0),
 \qquad (\chi_L,\chi_M,\chi_R)=(-1,1,1).
\]
Let $J(x,y)=(-y,x)$ and fix $\delta=1/10$. Embed block $i$ by
\begin{equation}\label{geom:embedding}
 \Phi_i(x,y)=(1+\delta x)v_i+\varepsilon\chi_i yJv_i.
\end{equation}

The parameter $\varepsilon>0$ is chosen sufficiently small for the three
finite blocks being embedded. It may be different at different nodes.
The union of these embedded blocks is $S_n$.

The role of the two terms is concrete: The first term moves our blocks onto a small segment on its corresponding branch, and the second term adds back the affine geometry of the blocks, while simultaneously making them sufficiently flattened. The $x$-coordinate of the three blocks in this construction increases in this order: $L,M,R$, and no two points share the same first coordinate.
Their local first-coordinate orders are reversed in $L,M$ and preserved in $R$, since all of the blocks point outwards from the origin. In addition to that, on block $L$ we did a reflection on its own $x$-axis before embedding, which will give us crucial consequences. These facts still hold when $\varepsilon$ is sufficiently small. For visual understanding, we refer to Figure \ref{geom:figure}.

\begin{figure}[!htbp]
\centering
\includegraphics[width=0.84\linewidth]{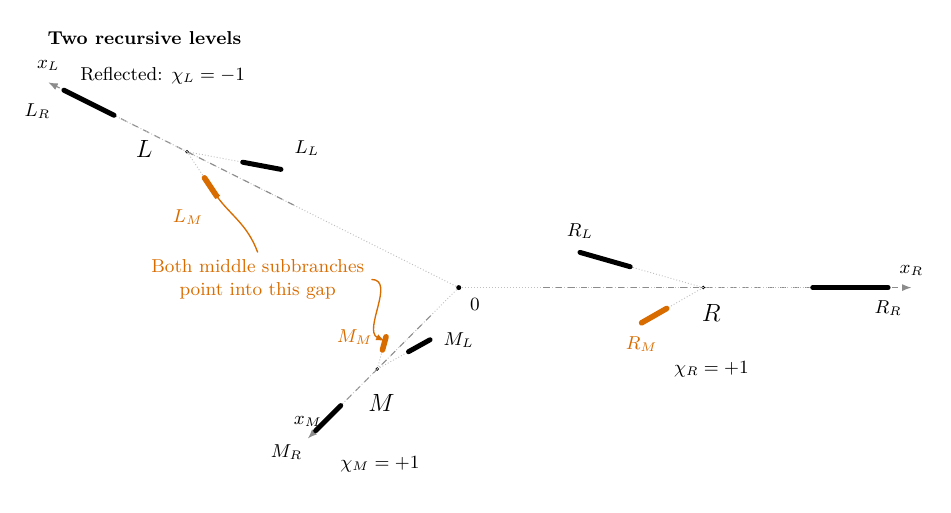}
\caption{The building of $S_n$ from the three blocks. All of them are placed pointing outwards from the origin, with the key difference of $L$ being reflected to its own $x$-axis. This is going to have important consequences in the removal command inheritance.}
\label{geom:figure}
\end{figure}

\FloatBarrier
\subsection{Verification of command inheritance to the blocks}
\begin{proof}
 The commands inherited by the blocks can be explained by a careful analysis of Figure \ref{geom:figure}.

 When all three blocks are still alive, commands $A$ and $C$ can remove each block's outermost point, so it corresponds to an $E_+$ on each of them. Command $B$ cannot touch the middle block since it is not visible from above, and acts as an $E_+$ on the other blocks.

 Once the first block disappears, there are three possible combinations of the remaining blocks: $LM$, $LR$ and $MR$. Filling in the table of the lemma is just a case analysis, which we will illustrate by pictures on Figure \ref{geom:figure9}.

 \[
\begin{array}{c|ccc}
\text{blocks}&A&B&C\\\hline
LM&(B,B)&(B,E_-)&(E_+,B)\\
LR&(C,C)&(E_+,E_+)&(C,C)\\
MR&(C,B)&(C,B)&(E_+,E_+).
\end{array}
\]

We describe an example case, all entries of the table are determined similarly. Let us consider row $LM$ and column $B$, which is depicted in Figure \ref{geom:figureLMB}. This means, blocks $L$ and $M$ remain in the construction, and we are only allowed to peel points from the convex hull that are visible from above. The analysis of this table entry is explained in the caption of Figure \ref{geom:figureLMB}.

\begin{figure}[!htbp]
\centering
\begingroup
\setlength{\fboxsep}{5mm}
\resizebox{0.85\linewidth}{!}{\colorbox{white}{\begin{tikzpicture}[x=1.40cm,y=1.24cm,>=Latex,
 every node/.style={font=\small},
 guide/.style={black!32,densely dotted,line width=.5pt},
 axis/.style={black!48,dashed,line width=.6pt,-{Latex[length=1.7mm]}},
 subbranch/.style={black!55,line width=1.8pt,line cap=round},
 middle/.style={orange!85!black,line width=2.1pt,line cap=round},
 exposed/.style={blue!65!black,line width=1.5pt,line join=round}]
\path[use as bounding box] (-8.15,-3.15) rectangle (2.5,4.15);

\foreach \name/\cx/\cy/\vx/\vy/\sgn in
 {L/-5/2.5/-2/1/-1,M/-1.5/-1.5/-1/-1/1}{
 \coordinate (\name-O) at (\cx,\cy);
 \coordinate (\name-axisstart) at ({\cx-2.6*.35*\vx},{\cy-2.6*.35*\vy});
 \coordinate (\name-axisend) at ({\cx+3.7*.35*\vx},{\cy+3.7*.35*\vy});
 \foreach \sub/\ux/\uy in {L/-2/1,M/-1/-1,R/3/0}{
  \pgfmathsetmacro{\dx}{.35*\ux*\vx-.16*\sgn*\uy*\vy}
  \pgfmathsetmacro{\dy}{.35*\ux*\vy+.16*\sgn*\uy*\vx}
  \coordinate (\name-\sub-inner) at ({\cx+.64*\dx},{\cy+.64*\dy});
  \coordinate (\name-\sub-mid) at ({\cx+.84*\dx+.009*\uy},{\cy+.84*\dy+.013*\ux});
  \coordinate (\name-\sub-outer) at ({\cx+1.08*\dx},{\cy+1.08*\dy});
 }
}

\filldraw[fill=black!4,draw=black!38,line width=.65pt]
 (L-R-outer) -- (L-L-outer) -- (L-M-outer) -- cycle;
\filldraw[fill=black!4,draw=black!38,line width=.65pt]
 (M-R-outer) -- (M-M-outer) -- (M-L-outer) -- cycle;
\draw[black!27,dashed,line width=.6pt] (L-R-outer) -- (M-R-outer);

\foreach \name in {L,M}{
 \draw[axis] (\name-axisstart) -- (\name-axisend)
   node[above left=1pt] {$x_{\name}$};
 \foreach \sub in {L,M,R}{
  \draw[guide] (\name-O) -- (\name-\sub-inner);
  \ifnum\pdfstrcmp{\sub}{M}=0
   \draw[middle] (\name-\sub-inner) -- (\name-\sub-outer);
  \else
   \draw[subbranch] (\name-\sub-inner) -- (\name-\sub-outer);
  \fi
  \foreach \pos in {inner,mid,outer}
   \fill[black!65] (\name-\sub-\pos) circle (1.1pt);
 }
}

\draw[exposed] (L-R-outer) -- (L-L-outer) -- (M-L-outer);
\foreach \pt in {L-R-outer,L-L-outer,M-L-outer}
 \fill[blue!65!black] (\pt) circle (2.4pt);
\draw[blue!65!black,line width=.75pt] (M-L-outer) circle (4.2pt);

\node[below left=4pt,text=blue!65!black] at (L-R-outer) {$L^+$};
\node[above right=3pt,text=blue!65!black] at (L-L-outer) {$L^-$};
\node[right=3pt] at (M-R-outer) {$M^+$};
\node[right=7pt,text=blue!65!black] at (M-L-outer) {$M^-$};

\node[font=\Large] at (-5.85,2.43) {$L$};
\node[font=\Large] at (-1.35,-1.76) {$M$};
\node[above=7pt,text=blue!65!black,align=center]
 at ($(L-R-outer)!.60!(L-L-outer)$)
 {Upper hull of $L$};

\node[above left=2pt,text=orange!85!black] at (L-M-inner) {$L_M$};
\node[above left=2pt,text=orange!85!black] at (M-M-outer) {$M_M$};

\node[anchor=west,text=blue!65!black,align=left] (parentupper)
 at (-.05,1.95) {Upper hull of $L\cup M$\\(main command $B$)};
\draw[blue!65!black,-{Latex[length=1.6mm]},line width=.7pt]
 (parentupper.west) to[out=180,in=40] ($(L-L-outer)!.50!(M-L-outer)$);

\node[anchor=west,align=left,font=\footnotesize,text=black!65] (hidden)
 at (.15,.08) {The rest of $M$ lies\\below the blue bridge};
\draw[black!50,-{Latex[length=1.5mm]},line width=.6pt]
 (hidden.west) to[out=180,in=20] ($(M-M-outer)!.40!(M-L-outer)$);
\node[anchor=west,text=blue!65!black,align=left]
 at (-.1,-1.65) {Only $M^-$ is available:\\local command $E_-$};

\end{tikzpicture}}}
\endgroup
\caption{Blocks $L,M$ remaining under command $B$. Only vertices on the blue upper hull of the whole set may be removed. This means all vertices on the upper hull of $L$ may be removed, so $L$ inherits command $B$. For $M$, only the rightmost vertex may be removed, which is at its negative direction, so $M$ inherits command $E_-$. So the corresponding table entry must be $(B,E_-)$.}
\label{geom:figureLMB}
\end{figure}

An exhaustive drawing of all $9$ table entries can be checked on Figure \ref{geom:figure9}. 

Once only one block remains, the statement of the lemma becomes obvious. For $L$ and $R$ the commands $A,B,C$ all inherit from the main set. For $M$, the commands $B,C$ get flipped. The inheritance of commands $E_+$ and $E_-$ is also straightforward if one just looks at the direction of these blocks on the drawing.

\begin{figure}[!p]
\centering
\includegraphics[width=\linewidth,height=0.86\textheight,keepaspectratio]{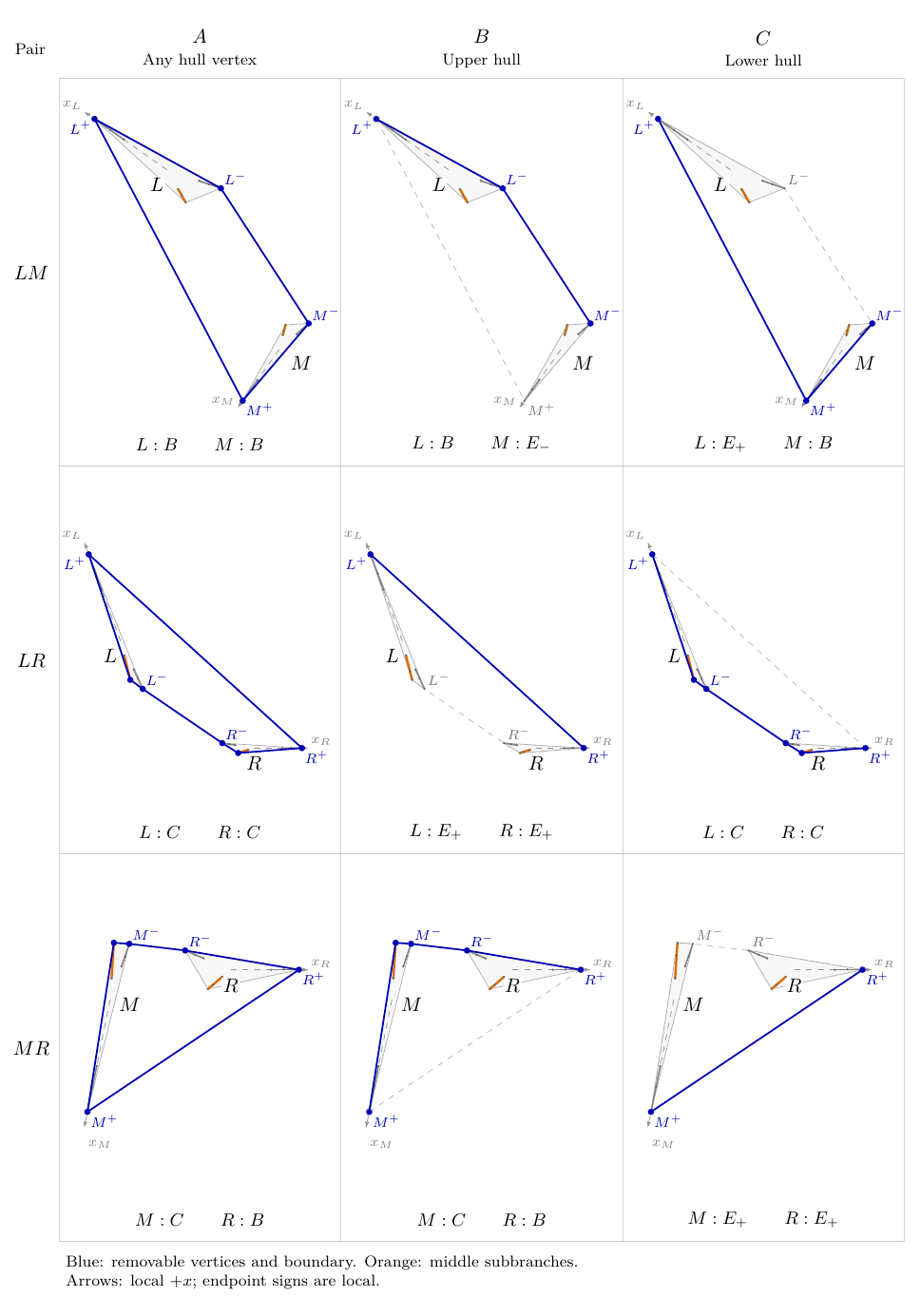}
\caption{The blue parts correspond to which parts of the drawing we can remove vertices from. Under each diagram, it is written what command the drawing corresponds to on the remaining blocks.}
\label{geom:figure9}
\end{figure}

Note that missing vertices from the blocks do not alter the commands they inherit, where each command is applied to the points which remain. Indeed, the points within each block have distinct local first coordinates. Since there are only finitely many pairs of points, we can choose $\epsilon$ sufficiently small that every segment joining two points within a block is arbitrarily close to parallel with its branch.

For each other block, its branch lies strictly on one side of the line containing our branch. Therefore, by choosing $\epsilon$ sufficiently small, all points of that other block lie on the same prescribed side of every line determined by two points of our block. These side relations are exactly those used in the geometric
arguments above, and deleting points cannot change them.

Hence, missing vertices do not change whether the block inherits an endpoint command, an upwards or downwards visibility command, or command $A$. This depends solely on the main command and the currently surviving blocks. This concludes the proof. \end{proof}

\FloatBarrier


\begin{thebibliography}{99}

\bibitem{Barnett76}
V. Barnett, The ordering of multivariate data,
\emph{J. Roy. Statist. Soc. Ser. A}
\textbf{139} (1976), no.~3, 318--354.
doi:\href{https://doi.org/10.2307/2344839}
{\nolinkurl{10.2307/2344839}}.

\bibitem{Chazelle85}
B. Chazelle, On the convex layers of a planar set,
\emph{IEEE Trans. Inform. Theory}
\textbf{31} (1985), no.~4, 509--517.
doi:\href{https://doi.org/10.1109/TIT.1985.1057060}
{\nolinkurl{10.1109/TIT.1985.1057060}}.

\bibitem{CT06}
T. M. Cover and J. A. Thomas,
\emph{Elements of Information Theory},
2nd ed., Wiley-Interscience, Hoboken, NJ, 2006.
doi:\href{https://doi.org/10.1002/047174882X}
{\nolinkurl{10.1002/047174882X}}.

\bibitem{Dum22}
A. Dumitrescu, Peeling sequences,
\emph{Mathematics} \textbf{10} (2022), no.~22, article 4287.
doi:\href{https://doi.org/10.3390/math10224287}{\nolinkurl{10.3390/math10224287}}.

\bibitem{DT25}
A. Dumitrescu and G. T\'oth, Peeling sequences,
\emph{Discrete Comput. Geom.} \textbf{73} (2025), no.~3, 837--849.
doi:\href{https://doi.org/10.1007/s00454-023-00616-8}{\nolinkurl{10.1007/s00454-023-00616-8}}.

\bibitem{EW85}
H. Edelsbrunner and E. Welzl,
On the number of line separations of a finite set in the plane,
\emph{J. Combin. Theory Ser. A}
\textbf{38} (1985), no.~1, 15--29.
doi:\href{https://doi.org/10.1016/0097-3165(85)90017-2}
{\nolinkurl{10.1016/0097-3165(85)90017-2}}.

\bibitem{HL13}
S. Har-Peled and B. Lidick\'y, Peeling the grid,
\emph{SIAM J. Discrete Math.}
\textbf{27} (2013), no.~2, 650--655.
doi:\href{https://doi.org/10.1137/120892660}
{\nolinkurl{10.1137/120892660}}.

\bibitem{HJM26}
A. Hisatsuga, G. Johnston, and R. Miyazaki,
A base-$8$ upper bound for planar peeling sequences,
\href{https://arxiv.org/abs/2609.13122}{arXiv:2609.13122} (2026).

\bibitem{Kraft49}
L. G. Kraft,
\emph{A device for quantizing, grouping, and coding
amplitude-modulated pulses},
M.S. thesis, Department of Electrical Engineering,
Massachusetts Institute of Technology, 1949.
\url{https://dspace.mit.edu/handle/1721.1/12390}.

\bibitem{McMillan56}
B. McMillan,
Two inequalities implied by unique decipherability,
\emph{IRE Trans. Inform. Theory}
\textbf{2} (1956), no.~4, 115--116.
doi:\href{https://doi.org/10.1109/TIT.1956.1056818}
{\nolinkurl{10.1109/TIT.1956.1056818}}.

\bibitem{SimonHigher26}
D. G. Simon,
The minimum number of peeling sequences of a point set,
\emph{Discrete Comput. Geom.}
\textbf{75} (2026), no.~4, 1122--1133.
doi:\href{https://doi.org/10.1007/s00454-024-00713-2}
{\nolinkurl{10.1007/s00454-024-00713-2}}.

\bibitem{Simon26}
D. G. Simon, Further analysis of peeling sequences,
\emph{Studia Sci. Math. Hungar.} \textbf{63} (2026), no.~2, 175--190.
doi:\href{https://doi.org/10.1556/012.2026.04354}{\nolinkurl{10.1556/012.2026.04354}}.
Also available as \href{https://arxiv.org/abs/2510.03832}{arXiv:2510.03832}.

\end{thebibliography}
\end{document}